\newtheorem{theorem}{Theorem}[section]
\newtheorem*{theorem*}{Theorem}
\theoremstyle{definition}
\newtheorem{definition}[theorem]{Definition}
\newtheorem{example}[theorem]{Example}
\newtheorem{question}{Question}
\newcommand{\R}{{\mathbb R}}
\renewcommand{\H}{\mathrm{H}}
\newcommand{\beq}{\begin{equation}}
\newcommand{\eeq}{\end{equation}}
\newcommand{\f}{\varphi}
\newcommand{\GL}{{\mathrm {GL}}}
\newcommand{\G}{{\mathrm G}}
\newcommand{\W}{\wedge}
\DeclareMathOperator\tr{tr}
\DeclareMathOperator\End{End}
\DeclareMathOperator\ad{ad}
\DeclareMathOperator{\Der}{Der}
\newcommand{\fra}{\mathfrak{a}}
\newcommand{\frg}{\mathfrak{g}}
\newcommand{\frh}{\mathfrak{h}}
\newcommand{\frm}{\mathfrak{m}}
\newcommand{\frn}{\mathfrak{n}}
\newcommand{\frs}{\mathfrak{s}}
\newcommand{\diag}{{\rm diag}}
\newcommand{\sst}{\scriptscriptstyle}
\numberwithin{equation}{section}
\title[Exact G$_2$-structures on unimodular Lie algebras]{Exact G$_{\mathbf2}$-structures on unimodular Lie algebras}
\author{Marisa Fern\'andez}
\address{Universidad del Pa\'{\i}s Vasco  (UPV / EHU), Facultad de Ciencia y Tecnolog\'{\i}a, 
Departamento de Matem\'aticas, Apartado 644, 48080 Bilbao, Spain}
\email{marisa.fernandez@ehu.es}
\author{Anna Fino} 
\address{Dipartimento di Matematica ``G.~Peano'' \\ Universit\`a degli Studi di Torino\\
Via Carlo Alberto 10\\
10123 Torino\\ Italy}
\email{annamaria.fino@unito.it}
\author{Alberto Raffero}
\address{Dipartimento di Matematica ``G.~Peano'' \\ Universit\`a degli Studi di Torino\\
Via Carlo Alberto 10\\
10123 Torino\\ Italy}
\email{alberto.raffero@unito.it}
\subjclass[2010]{53C10, 53C30}
\keywords{exact $\G_2$-structure, Lie algebra cohomology, Betti numbers}
\begin{document}
\begin{abstract} 
We consider seven-dimensional unimodular Lie algebras $\mathfrak{g}$ admitting exact G$_2$-structures, focusing our attention on those with vanishing third Betti number $b_3(\mathfrak{g})$.  
We discuss some examples, both in the case when $b_2(\frg)\neq0$ and in the case when the Lie algebra $\frg$ is (2,3)-trivial, i.e., when both $b_2(\mathfrak{g})$ and $b_3(\mathfrak{g})$ vanish. 
These examples are solvable, as $b_3(\mathfrak{g})=0$, but they are not strongly unimodular, 
a necessary condition for the existence of lattices on the simply connected Lie group corresponding to $\mathfrak{g}$. 
More generally, we prove that any seven-dimensional (2,3)-trivial strongly unimodular Lie algebra does not admit any exact G$_2$-structure. 
From this, it follows that there are no compact examples of the form $(\Gamma\backslash\G,\varphi)$, where $\G$ is a seven-dimensional simply connected Lie group with (2,3)-trivial Lie algebra, 
$\Gamma\subset\G$ is a co-compact discrete subgroup, and $\varphi$ is an exact $\G_2$-structure on $\Gamma\backslash\G$ induced by a left-invariant one on $\G$.
\end{abstract}

\maketitle

%%%%%%%%%%%%%%%%%%%%%%%%%%%%%%%%%%%%%%%%%%%%%%%%%%%%%%%%%%%%%%%%%%%%%%%%%%%%%%%%%%%%%%%%%
%%%%%%%%%%%%%%%%%%%%%%%%%%%%%%%%%%%%%%%%%%%%%%%%%%%%%%%%%%%%%%%%%%%%%%%%%%%%%%%%%%%%%%%%%
%																INTRODUCTION											
%%%%%%%%%%%%%%%%%%%%%%%%%%%%%%%%%%%%%%%%%%%%%%%%%%%%%%%%%%%%%%%%%%%%%%%%%%%%%%%%%%%%%%%%%
%%%%%%%%%%%%%%%%%%%%%%%%%%%%%%%%%%%%%%%%%%%%%%%%%%%%%%%%%%%%%%%%%%%%%%%%%%%%%%%%%%%%%%%%%
\section{Introduction}

Let $M$ be a seven-dimensional smooth manifold. A $\G_2$-{\em structure} on $M$ 
is a reduction of the structure group of 
its frame bundle from ${\GL}(7,\mathbb{R})$ to the  compact exceptional Lie group $\G_2$.  
In \cite{Gray}, Gray proved that a smooth $7$-manifold carries $\G_2$-structures if and only if it is orientable and spin. 

The existence of a $\G_2$-structure on $M$
is characterized by the existence of a globally defined 3-form $\f \in\Omega^3(M)$
satisfying a certain nondegeneracy condition.  
The {\em $\G_2$-form $\f$} gives rise to a Riemannian metric $g_\f$ with volume form $dV_\f$ via the identity  
\begin{equation}\label{G2metricvol}
g_\f(X,Y)\,dV_\f = \frac16\,\iota_{\sst X}\f\W\iota_{\sst Y}\f\W\f,
\end{equation}
for any pair of vector fields $X,Y$ on $M$, where $\iota_{\sst X}$ denotes the contraction by $X.$
Moreover, at each point  $ x \in M$ there exists a basis $\{ e^1, \ldots, e^7 \}$ of the cotangent space $T^*_x M$ for which
\[
\left.\varphi\right|_x =e^{127} +e^{347} +e^{567} +e^{135} - e^{146} -e^{236}-e^{245}. 
\]
Here, $e^{127}$ stands for $e^{1}\wedge e^{2}\wedge e^{7}$, and so on. Such a basis is called an {\em adapted $\G_2$-basis}.

A $\G_2$-structure is said to be {\em closed} (or {\em calibrated}) if the defining 3-form $\f$ satisfies the equation $d\f=0$, 
while it is called {\em co-closed} (or {\em co-calibrated}) if $d*_\f\f=0$, $*_\f$ being the Hodge operator defined by $g_\f$ and $dV_\f$.  
When both of these conditions hold, the intrinsic torsion of the $\G_2$-structure vanishes identically, the Riemannian metric $g_\f$ is Ricci-flat, 
and $\mathrm{Hol}(g_\f)\subseteq\G_2$ (cf.~\cite{Bryant-1,FerGray}). In this case, the $\G_2$-structure is said to be {\em torsion-free}.

The existence of a torsion-free $\G_2$-structure on a compact 7-manifold $M$ imposes various constraints on the topology. For instance, the
third Betti number must satisfy $b_3(M) \geq 1$ \cite{Bonan}, and  the first Betti number $b_1(M)\in \{0, 1, 3, 7\}$  \cite{Joyce1}. 
Moreover, if  $\mathrm{Hol}(g_\f) = \G_2$, then the fundamental group $\pi_1(M)$ is finite \cite{Joyce1}, and so $b_1(M) = 0$. 
However, a non-compact manifold with a torsion-free $\G_2$-structure may have vanishing third Betti number.  
An example is given by the metric cone of the nearly K\"ahler flag manifold ${\mathbb F}_{1,2}$ \cite{Bryant-1}. 

In the literature, all known examples of compact 7-manifolds $M$ admitting a closed $\G_2$-structure, but not torsion-free $\G_2$-structures,
have $b_1 (M) > 0$ and  $b_3 (M) > 0$ (see \cite{CF, Fer, Fer1, FFKM}). 
A longstanding open question concerns the existence of closed $\G_2$-structures on compact 7-manifolds with $b_3 (M)=0$, such as the 7-sphere. 
Notice that, in this case, any closed $\G_2$-structure would be defined by an exact 3-form. 

Motivated by this problem, one may investigate the existence of such type of examples when $M=\Gamma\backslash\G$, 
where $\G$ is a seven-dimensional simply connected Lie group, $\Gamma\subset\G$ is a co-compact discrete subgroup (lattice), and $M$ is endowed with an {\em exact} $\G_2$-structure induced by a 
left-invariant one on $\G$.  

Recall that the de Rham cohomology groups of $\Gamma\backslash\G$ are isomorphic to those of the Chevalley-Eilenberg complex of the Lie algebra $\frg$ of $\G$ when, for instance,  
the latter is nilpotent or completely solvable (cf.~\cite{Nom,Hat}). 
Thus, a possible strategy to study the problem consists in looking for suitable Lie algebras $\frg$ with third Betti number $b_3(\frg)=0$ and admitting closed $\G_2$-structures, 
and see whether the corresponding simply connected Lie group admits a lattice.  
Since seven-dimensional nilpotent Lie algebras cannot admit exact $\G_2$-structures (cf.~\cite{CF}), and since any Lie algebra satisfying $b_3(\frg)=0$ is solvable and not nilpotent (see e.g.~\cite{MaSw1}), 
one may firstly focus on the latter case. 
By a result of Garland \cite{Gar}, every simply connected solvable Lie group containing a lattice must be strongly unimodular (see Definition \ref{def:stronglyunim} for details). 
This provides a further restriction on $\frg$. 
 
In Section \ref{sect:(2,3)-trivial}, we focus on Lie algebras $\frg$ satisfying the conditions $b_2(\frg)=0=b_3(\frg)$. 
They are known as (2,3)-{\em trivial} in the literature, and they are always solvable with codimension one derived algebra $[\frg,\frg]$ \cite{MaSw1}. 

In Example \ref{ex23trivial}, we show the existence of a unimodular (2,3)-trivial Lie algebra $\frs$ admitting closed $\G_2$-structures. 
Since $\frs$ is unimodular, by Hodge duality it also satisfies $b_4(\frs)=0=b_5(\frs)$. 
However, we prove that it is not strongly unimodular, thus the corresponding simply connected solvable Lie group does not admit lattices. 
Notice that this example reveals an interesting difference between closed $\G_2$-structures and symplectic structures (see \cite{FiRa} for further results in this direction). 
Indeed, both structures are defined by closed differential forms satisfying the same non-degeneracy condition, namely their orbit under the action of the general linear group is open, 
but a unimodular Lie algebra cannot admit exact symplectic forms (cf.~\cite{DiBa}). 

More generally, we prove that it is not possible to obtain compact examples of the form $M=\Gamma\backslash\G$, 
when the Lie algebra of the simply connected Lie group $\G$ is (2,3)-trivial and strongly unimodular.   
In detail, using a characterisation of (2,3)-trivial Lie algebras by Madsen and Swann \cite{MaSw1}, 
we first obtain the classification of all seven-dimensional (2,3)-trivial strongly unimodular Lie algebras up to isomorphism (Theorem \ref{Thm23trivial}). 
Then, we show that none of them admits exact $\G_2$-structures (Theorem \ref{Thm23trivialNoexact}). 
 
In Section \ref{sect:lie-group}, we discuss an example of a seven-dimensional unimodular Lie algebra $\frh$ satisfying $b_3(\frh)=0$, $b_2(\frh)\neq0$ and admitting closed $\G_2$-structures.  
Also in this case, it turns out that the solvable Lie algebra $\frh$ is not strongly unimodular. 
This leads to the following open problem: {\em is there any seven-dimensional Lie algebra $\frg$ satisfying $b_3(\frg)=0$ which is strongly unimodular and admits closed $\G_2$-structures?}

%%%%%%%%%%%%%%%%%%%%%%%%%%%%%%%%%%%%%%%%%%%%%%%%%%%%%%%%%%%%%%%%%%%%%%%%%%%%%%%%%%%%%%%%%
%%%%%%%%%%%%%%%%%%%%%%%%%%%%%%%%%%%%%%%%%%%%%%%%%%%%%%%%%%%%%%%%%%%%%%%%%%%%%%%%%%%%%%%%%
%														UNIMODULAR EXAMPLES										
%%%%%%%%%%%%%%%%%%%%%%%%%%%%%%%%%%%%%%%%%%%%%%%%%%%%%%%%%%%%%%%%%%%%%%%%%%%%%%%%%%%%%%%%%
%%%%%%%%%%%%%%%%%%%%%%%%%%%%%%%%%%%%%%%%%%%%%%%%%%%%%%%%%%%%%%%%%%%%%%%%%%%%%%%%%%%%%%%%%
\section{(2,3)-trivial strongly unimodular Lie algebras} \label{sect:(2,3)-trivial}
In this section, we focus on seven-dimensional (2,3)-trivial Lie algebras.  
First of all, we recall that being (2,3)-trivial imposes strong constraints on the structure of a Lie algebra. 
Indeed, as shown in \cite{MaSw1}, any finite-dimensional Lie algebra $\frg$ different from $\R,\R^2$ and satisfying $b_3(\frg)=0$ is solvable and not nilpotent. 
Moreover, if also $b_2(\frg)=0$, then the derived algebra $\frn\coloneqq[\frg,\frg]$ is a codimension one ideal, and $\frg$ is the semidirect product $\frg = \frn\rtimes\R$. 
We emphasise that $\frn$ is nilpotent, as $\frg$ is solvable.

In the next example, we describe a seven-dimensional $(2,3)$-trivial unimodular Lie algebra admitting closed $\G_2$-structures. 
This shows in particular that exact $\G_2$-structures behave differently from exact symplectic structures, as the latter do not exist on unimodular Lie algebras (cf.~\cite{DiBa}). 

\begin{example}\label{ex23trivial}
Let $\frs=\langle e_1,e_2,e_3,e_4,e_5,e_6,e_7\rangle$ be the seven-dimensional solvable Lie algebra with the following nonzero Lie brackets
\[
\renewcommand\arraystretch{1.2}
\begin{array}{llll}
[e_1, e_3] = e_4 - 3 e_6,			&	[e_1, e_4] =  e_5,			&	[e_1,e_5] = -e_6,				& 	[e_2, e_3] =  e_5,							\\[2pt]
[e_2, e_4] =  -e_6, 				&	[e_1, e_7] = 2 e_1,			&	[e_2, e_7] = 4 e_2,				&	[e_3, e_7] = -\frac92 e_3+6 e_5,				\\[2pt]				
[e_4, e_7] =  -\frac52 e_4 + 6 e_6,	&	[e_5, e_7] = -\frac12 e_5,		&	[e_6, e_7] =  \frac32 e_6.			&											\\ [2pt]			
\end{array}
\]
Notice that $\frs$ is the semidirect product $\frs=\frn\rtimes\R,$ where $\R=\langle e_7\rangle$ and $\frn=\langle e_1,\ldots,e_6\rangle$ is isomorphic to the unique six-dimensional 4-step 
nilpotent Lie algebra with $b_1(\frn)=3$ and $b_2(\frn)=4$ (cf.~\cite[Table A.1]{Sal}).  
Moreover, $\frs$ is completely solvable, i.e., for each $X\in\frs$ the map $\ad_{\sst X}\in\End(\frs)$ has only real eigenvalues. 

Let $(e^1,\ldots,e^7)$ be the dual basis of $(e_1,\ldots, e_7)$. 
Then, the structure equations of $\frs$ can also be written by means of the Chevalley-Eilenberg differentials of the covectors $e^i$. In detail: 
\[
\begin{split}
&de^1 = -2 e^{17},\quad de^2 = -4 e^{27},\quad de^3 = \frac92 e^{37}, \quad de^4 = \frac52 e^{47} - e^{13},	\\
&de^5 = \frac12 e^{57} - 6 e^{37} - e^{14} - e^{23},\quad de^6	= -\frac32 e^{67} - 6 e^{47} + 3 e^{13} + e^{15} + e^{24}, \quad de^7=0.
\end{split}
\]
Using these equations, it is possible to compute the cohomology groups of the Chevalley-Eilenberg complex $(\Lambda^\bullet(\frs^*),d)$ of $\frs$, obtaining 
\[
H^1(\frs) = \langle [e^7]\rangle, \quad H^6(\frs) = \langle [e^{123456}] \rangle, \quad H^7(\frs) = \langle [e^{1234567}] \rangle, \quad H^i(\frs) = 0,~i=2,3,4,5. 
\]
We immediately see that $b_2(\frs)=0=b_3(\frs)$, hence $\frs$ is (2,3)-trivial. 
Moreover, since $H^7(\frs)\cong \R$, the Lie algebra $\frs$ is unimodular. Recall that this is equivalent to having $\tr(\ad_{\sst X})=0$ for each $X\in\frs$.  

The Lie algebra $\frs$ admits closed (hence exact) $\G_2$-structures. An example is given by the following 3-form
\[
\f = e^{127} +e^{347} +e^{567} +e^{135} - e^{146} -e^{236}-e^{245} = d\left(\frac16 e^{12} + \frac{23}{7} e^{34} + 2 e^{36} - 2 e^{45} + e^{56} \right). 
\]

Consequently, the simply connected solvable Lie group $\mathrm{S}$ with Lie algebra $\frs$ is endowed with a {\em left-invariant} exact $\G_2$-structure obtained from $\f$ via left multiplication. 
\end{example}

It is well-known that a Lie group admits a lattice only if it is unimodular (cf.~\cite{Mil}). 
When the Lie group is solvable, there is a stronger necessary condition for the existence of lattices, namely the group must be {\em strongly unimodular} (cf.~\cite[Prop.~3.3]{Gar}). 
We recall the definition here. 

\begin{definition}\label{def:stronglyunim}
Let $\mathrm{G}$ be a simply connected solvable Lie group with Lie algebra $\frg$. 
Denote by $\frn^{\sst0}\coloneqq\frn$ the nilradical of $\frg$ and, for each positive integer $i\geq 1$, let $\frn^{\sst i} \coloneqq [\frn,\frn^{\sst i-1}]$ 
denote the $i^{th}$ term in the descending central series of $\frn$. 
The Lie algebra $\frg$ is {\em strongly unimodular} if for all $X\in\frg$ the restriction of $\ad_{\sst X}$ to each space $\frn^{\sst i}/\frn^{\sst i+1}$ is traceless. 
In this case, the Lie group $\mathrm{G}$ is said to be {\em strongly unimodular}.
\end{definition}

Notice that any solvable strongly unimodular Lie algebra $\frg$ is unimodular. Indeed, if the nilradical $\frn$ of $\frg$ is $s$-step nilpotent, then we have the vector space decomposition
\begin{equation}\label{susplit}
\frg = \frg / \frn \oplus  \frn/\frn^{\sst1} \oplus \frn^{\sst1}/\frn^{\sst2} \oplus \cdots \oplus \frn^{\sst s-2}/\frn^{\sst s-1} \oplus  \frn^{\sst s-1}, 
\end{equation}
where $\frg / \frn$ is abelian since $[\frg,\frg]\subseteq\frn$. 
Now, for every $X\in\frg$, the endomorphism $\ad_{\sst X}\in\End(\frg)$ preserves every summand of the splitting \eqref{susplit}. 
From this and the Definition \ref{def:stronglyunim}, we immediately get that $\ad_{\sst X}$ is traceless, i.e., $\frg$ is unimodular. 
On the other hand, a unimodular solvable Lie algebra may not be strongly unimodular. This is the case, for instance, of the Lie algebra $\frs$ considered in Example \ref{ex23trivial}. 
Indeed, its nilradical $\frn$ has the following descending central series 
\[
\frn^{\sst0}=\frn,\quad \frn^{\sst1} = \langle e_4-3 e_6,e_5,e_6\rangle,\quad \frn^{\sst2} = \langle e_5, e_6\rangle, \quad \frn^{\sst3} = \langle e_6\rangle,\quad \frn^{\sst4} = \{0\}, 
\]
and the restriction of $\ad_{e_7}$ to $\frn^{\sst3}$ is not traceless. This shows in particular that the simply connected solvable Lie group $\mathrm{S}$ does not contain any lattice. 

It is then natural to ask the following. 
\begin{question}
Is there any solvable strongly unimodular Lie algebra admitting exact $\G_2$-structures?
\end{question}

As we are interested in examples having as many zero Betti numbers as possible, we investigate this problem under the assumption that the Lie algebra is $(2,3)$-trivial. 
Our strategy is the following: we first determine all possible (2,3)-trivial, strongly unimodular Lie algebras of dimension seven up to isomorphism, 
and then we investigate whether any of them admits exact $\G_2$-structures. 
In the next theorem, we deal with the first problem. 
\begin{theorem}\label{Thm23trivial}
A (2,3)-trivial, strongly unimodular seven-dimensional Lie algebra is solvable and its six-dimensional nilradical is either abelian or isomorphic to one of the following 
nilpotent Lie algebras
\[
\left(0,0,0,0,e^{12},e^{34}\right),\quad \left(0,0,0,0,e^{13}-e^{24},e^{14}+e^{23}\right). 
\]
\end{theorem}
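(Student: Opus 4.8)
The plan is to exploit the structural result recalled above: by Madsen--Swann a $(2,3)$-trivial Lie algebra is a semidirect product $\frg=\frn\rtimes_D\R$, where $\frn$ is six-dimensional nilpotent, $\R=\langle e_7\rangle$, and $D\coloneqq\ad_{e_7}|_\frn\in\Der(\frn)$. Writing the Chevalley--Eilenberg differential of $\frg$ as $d\a=d_\frn\a-e^7\wedge D^*\a$ on $\frn^*$, with $de^7=0$, the Wang-type long exact sequence of the extension yields
\[
b_k(\frg)=\dim\ker\big(D_k\big)+\dim\operatorname{coker}\big(D_{k-1}\big),
\]
where $D_k$ is the map induced by $D$ on $H^k(\frn)$. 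Thus $(2,3)$-triviality is equivalent to the invertibility of $D_1,D_2$ and the injectivity of $D_3$; by Poincar\'e duality the pairing $H^3(\frn)\times H^3(\frn)\to H^6(\frn)$ is $D$-invariant, so $D_3$ is skew and is injective precisely when invertible. Since $D$ is a derivation, it acts on all of $\Lambda^\bullet\frn^*$ and every eigenvalue on $\frn$ is a sum of the weights $\a_1,\dots,\a_p$ of $D$ on $V\coloneqq\frn/[\frn,\frn]$ (with $p=b_1(\frn)$), brackets adding weights. Each class of $H^k(\frn)$ therefore carries a weight equal to a sum of the $\a_i$, and the criterion reads: \emph{no nonzero class in $H^1,H^2,H^3$ has weight zero}. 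Strong unimodularity enters as the vanishing of $\tr(D|_{\frn^i/\frn^{i+1}})$ for every $i$, in particular $\sum_i\a_i=0$.

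I would then argue by the value of $p=b_1(\frn)$. For $p=6$ the algebra is abelian and a generic weight vector with $\sum_i\a_i=0$, say $(1,2,4,8,16,-31)$, makes every two- and three-fold sum nonzero, so all conditions hold. For the other values $p\ne4$ a rigidity phenomenon forces a weight-zero class: the derivation fixes the weights of $[\frn,\frn]$ as prescribed sums, and the trace conditions then pin the $\a_i$ down so tightly that some cohomology class must have weight zero. For $p=5$, where $\frn\cong\frh_3\oplus\R^3$ or $\frn\cong\frh_5\oplus\R$, strong unimodularity forces either $\a_3+\a_4+\a_5=0$ (so $[e^{345}]\in H^3$ has weight zero) or a generator of $V$ of weight zero (killing $H^1$). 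For $p=3$ and $\frn$ the free two-step algebra on three generators, the trace conditions force the six weights into the symmetric pattern $\a_1,\a_2,\a_3,-\a_1,-\a_2,-\a_3$, so that $[e^{14}-e^{25}]\in H^2$ is nonzero of weight zero; the remaining $p\le3$ algebras fall to the same mechanism. This reduces the problem to $p=4$, where $\dim[\frn,\frn]=2$.

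In the critical case $p=4$ the algebra $\frn$ is three- or two-step. If $\frn$ is three-step then $\frn^2$ is one-dimensional; the trace condition on $\frn^1/\frn^2$ forces all of $[\frn,\frn]$ to be of weight zero, and the trace condition on $\frn^2=\langle[v,y]\rangle$ (with $v\in V$, $y\in[\frn,\frn]$) then gives the weight of this $v$ equal to $0$, contradicting invertibility of $D_1$. If $\frn$ is two-step, it is encoded by a two-dimensional pencil $\langle\omega_1,\omega_2\rangle\subset\Lambda^2V^*$ of alternating forms on $V\cong\R^4$, classified up to $\GL(V)$ by the Pfaffian form $\omega\mapsto\omega\wedge\omega$ of signature $(3,3)$ and by the position of the pencil relative to its cone of decomposables. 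The Kronecker--Segre normal forms leave four types. The \emph{parabolic} pencil $\langle e^{12},e^{13}+e^{24}\rangle$ and the \emph{isotropic} pencil $\langle e^{12},e^{13}\rangle$ are excluded by a forced weight-zero class: for the latter $[e^{135}]=[e^{126}]$ is nonzero of weight $(\a_1+\a_3)+(\a_1+\a_2)\equiv0$, and for the former the forced weights $(\a_1,-\a_1,-\a_1,\a_1)$ make $[e^{34}]\in H^2$ of weight zero. The two surviving types are the \emph{indefinite} pencil spanned by transverse decomposables, $\omega_1\wedge\omega_2\ne0$, giving $\frn_1\cong\frh_3\oplus\frh_3$, and the \emph{definite} pencil with $\omega_1\wedge\omega_2=0$, $\omega_i\wedge\omega_i\ne0$, giving $\frn_2\cong\frh_3^{\bC}$ (the complex Heisenberg algebra). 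For $\frn_1$ the diagonal derivation $D=\diag(\a_1,\a_2,\a_3,\a_4,\a_1+\a_2,\a_3+\a_4)$ with $(\a_1,\a_2,\a_3,\a_4)=(1,2,4,-7)$ is strongly unimodular, and one checks directly that the eight weights on $H^2$ and the ten on $H^3$ are nonzero. For $\frn_2$ one must use complex eigenvalues: with $\z^1=e^1+ie^2$, $\z^2=e^3+ie^4$, $\z^3=e^5+ie^6$ one has $d\z^3=\z^1\wedge\z^2$, the complex-linear map $\z^j\mapsto\mu_j\z^j$ ($j=1,2$), $\z^3\mapsto(\mu_1+\mu_2)\z^3$ is a real derivation, and $\mu_1=1+2i$, $\mu_2=-1+i$ is strongly unimodular since $\re(\mu_1+\mu_2)=0$; the binding conditions $\mu_i+\bar\mu_j\ne0$ then hold (e.g.\ $\mu_1+\bar\mu_2=i$), and a short K\"unneth computation confirms that no class in $H^1,H^2,H^3$ has weight zero, so $\frn_2\rtimes_D\R$ is $(2,3)$-trivial.

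The principal obstacle is this two-step $p=4$ analysis. On one hand, reducing the $\GL(4,\R)$-orbits of skew pencils to the four normal forms, and identifying $\frn_1,\frn_2$ among them, requires the Kronecker theory of skew pencils rather than a naive enumeration. On the other hand, the definite case $\frn_2$ is genuinely subtle: strong unimodularity forces every \emph{real} diagonal derivation to vanish on $[\frn,\frn]$, which would create a weight-zero class, so one is obliged to pass to complex eigenvalues and to check that the real cohomology still has no weight-zero class. Showing that the definite pencil admits such a complex derivation while the indefinite, parabolic and isotropic pencils behave as above is the technical heart of the proof.
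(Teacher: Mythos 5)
Your strategy is genuinely different from the paper's. The paper runs through all 34 six-dimensional nilpotent Lie algebras of Salamon's list, computes the generic derivation $D$ and the matrices $A_1,A_2,A_3$ of the induced action on $H^k(\frn)$ in each case, and checks directly whether the strong unimodularity equations are compatible with $\det(A_k)\neq0$. You instead organise the analysis by $b_1(\frn)$ and by the weights of (the semisimple part of) $D$, and in the critical two-step case with $b_1(\frn)=4$ you invoke the classification of pencils of skew forms on $\R^4$ via the Pfaffian quadratic form. Where your argument is concrete it is correct and consistent with the paper's findings: the weight vector $(1,2,4,8,16,-31)$ works for the abelian nilradical; $\mathrm{diag}(1,2,4,-7,3,-3)$ gives nonzero weights on all of $H^1,H^2,H^3$ for $\frn_{\sst1}$; the complex derivation with $\mu_1=1+2i$, $\mu_2=-1+i$ works for $\frn_{\sst2}$, although the dangerous weight-zero line spanned by $(e^5+ie^6)$ wedged with the conjugate generators, of weight $2\,\mathrm{Re}(\mu_1+\mu_2)=0$, survives in $\Lambda^3(\frn_{\sst2}^*)$ and is killed only in cohomology (the unique closed combination of it with its conjugate is exact) --- this needs an explicit sentence. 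Your exclusions for $b_1(\frn)=5$, for the three-step $b_1(\frn)=4$ algebras, and for the isotropic and parabolic pencils land the obstruction in $H^1$, $H^1$, $H^3$ and $H^2$ respectively, exactly matching the paper's lists. Two implicit reductions should also be stated: that $D$ may be replaced by its semisimple part (also a derivation, satisfying the same trace conditions and inducing the Jordan decomposition of each $A_k$), and that ``weight-zero class'' must be read in the generalized-eigenspace sense.

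The genuine gap is the sentence ``the remaining $p\le3$ algebras fall to the same mechanism.'' These are roughly 25 of the 34 possible nilradicals --- the bulk of the case analysis --- and the mechanism is demonstrably not uniform: as the paper's tables show, for some of these algebras the forced fixed vector lies in $H^1(\frn)$, for others only in $H^2(\frn)$, and for others only in $H^3(\frn)$ (e.g.\ $\left(0,0,e^{12},e^{13},e^{23},e^{14}\pm e^{25}\right)$), so no single weight identity disposes of them all; moreover for several of them the induced action on $\frn/[\frn,\frn]$ is not forced to be diagonal in the given basis, so the ``forced weight pattern'' arguments you use for $b_1(\frn)\in\{4,5\}$ require separate justification in each case. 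As written, the proposal establishes the theorem only for nilradicals with $b_1(\frn)\ge4$ together with the free two-step algebra on three generators. To be complete you must either carry out the weight analysis algebra by algebra for $b_1(\frn)\le3$ (which is essentially what the paper does) or supply a genuinely uniform structural lemma covering those cases; no such lemma is currently in the text.
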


\begin{proof} 
By \cite[Thm.~4.3]{MaSw1}, a Lie algebra $\frg$ with derived algebra $\frn\coloneqq[\frg,\frg]$ is (2,3)-trivial if and only if $\frg$ is solvable, $\frn$ is nilpotent of codimension one in $\frg$ and  
$H^1(\frn)^\frg = \{0\} = H^2(\frn)^\frg = H^3(\frn)^\frg$. 
Here, $H^k(\frn)$ denotes the $k^{th}$ cohomology group of the Chevalley-Eilenberg complex $\left(\Lambda^\bullet(\frn^*),\hat{d}\right)$ of $\frn$, and 
$H^k(\frn)^\frg$ denotes the fixed point set of the $\frg$-action on $H^k(\frn)$ for which $\frn\subset\frg$ acts trivially and every $X\in \frg/\frn\cong\R$ acts as follows: 
\[
X\cdot [\alpha] = [\iota_{\sst X} d\alpha], \quad \forall~[\alpha]\in H^k(\frn),
\]
where $d$ is the Chevalley-Eilenberg differential on $\frg$. 

This characterisation leads us to considering seven-dimensional solvable Lie algebras of the form $\frg = \frn \rtimes_D \R$, where $\frn$ is a six-dimensional nilpotent Lie algebra 
and $\R=\langle X\rangle$ with $\ad_{\sst X} = D\in\Der(\frn)$. To determine all possible Lie algebras of this type, up to isomorphism, we proceed as follows. 
It is known that there exist 34 six-dimensional nilpotent Lie algebras up to isomorphism.  
For each of them, we consider the basis $\{e_1,\ldots, e_6\}$ of $\frn$ for which its structure equations are those given in \cite[Table A.1]{Sal}. 
We can then compute the generic derivation $D\in\Der(\frn)$ and consider the $6\times6$ matrix associated to it with respect to the basis $\{e_1,\ldots, e_6\}$. 
This matrix may have up to 36 unknown entries, which we denote by $a_i$. Finally, we let  $\R = \langle e_7 \rangle$ with $\ad_{e_7} = D$. 

Now, for any six-dimensional nilpotent Lie algebra $\frn$, we have to check whether the Lie algebra $\frg = \frn \rtimes_D \R = \langle e_1,\ldots,e_7\rangle$ 
can both be strongly unimodular and satisfy $H^1(\frn)^\frg = \{0\} = H^2(\frn)^\frg = H^3(\frn)^\frg$. 
Clearly, $\frg$ is strongly unimodular if and only if the restriction of $\ad_{e_7} = D$ to each space $\frn^{\sst i}/\frn^{\sst i+1}$ is traceless. 
This gives a set of linear equations in the real variables $a_i$, say $\mathcal{S}$. 
As for the second condition, we first determine a basis of the cohomology groups $H^k(\frn)$, $k=1,2,3$, 
and then we compute the square matrix $A_k$ associated to the linear endomorphism $H^k(\frn)\ni[\alpha]\mapsto e_7\cdot[\alpha]$ with respect to that basis. 
The entries of these matrices are polynomials in the variables $a_i$, and the condition $H^k(\frn)^\frg = \{0\}$ is equivalent to $A_k$ being non-singular.  

Summing up, for each six-dimensional nilpotent Lie algebra $\frn$, we have to solve the set of linear equations $\mathcal{S}$ under the constraints $\det(A_k)\neq0,~k=1,2,3.$ 
For 31 out of 34 non-isomorphic six-dimensional nilpotent Lie algebras this is not possible. 

As an example, let us consider the first nilpotent Lie algebra appearing in \cite[Table A.1]{Sal}. 
Its structure equations with respect to the dual basis $\{e^1,\ldots,e^6\}$ of $\{e_1,\ldots,e_6\}$ are the following
\[
\frn = \left(0,0,e^{12},e^{13},e^{14}+e^{23},e^{34}-e^{25}\right). 
\]
The descending central series of $\frn$ is 
\[ 
\frn^{\sst1} = \langle e_3,e_4,e_5,e_6 \rangle,\quad \frn^{\sst2} = \langle e_4,e_5,e_6 \rangle,\quad \frn^{\sst3} = \langle e_5,e_6 \rangle, \quad \frn^{\sst4} = \langle e_6 \rangle, \quad \frn^{\sst5} = \{0\},  
\]
and the matrix associated to a generic derivation $D\in\Der(\frn)$ with respect to the basis $\{e_1,\ldots,e_6\}$ is the following 
\[
D = 
\begin{pmatrix}
a_1	&	0	&	0	&	0	&	0	&	0	\\
0	&	2a_1	&	0	&	0	&	0	&	0	\\
a_2	&	a_3	&	3a_1	&	0	&	0	&	0	\\
a_4	&	0	&	a_3	&	4a_1	&	0	&	0	\\
a_5	&	a_6	&     -a_2	&	a_3	&	5a_1	&	0	\\
a_7	&	a_8	&     a_5	&	-a_4	&	a_2	&	7a_1
\end{pmatrix}. 
\]
Now, we see that $\frg = \frn\rtimes_D\R$ is strongly unimodular if and only if $a_1=0$. 
On the other hand, we have $H^1(\frn) = \langle [e^1], [e^2] \rangle$, and we see that 
\[
e_7\cdot[e^1] = [\iota_{e_7}de^1] = -a_1 [e^1], \quad e_7\cdot[e^2] = [\iota_{e_7}de^2] = -2a_1 [e^2].
\] 
Thus, $\det(A_1)=2(a_1)^2$, whence it follows that $\frg$ cannot be both (2,3)-trivial and strongly unimodular. 
In the remaining 30 cases, we proceed similarly. 
We obtain that any solution of $\mathcal{S}$ implies $\det(A_1)=0$ when $\frn$ is one of the following Lie algebras
\[
\renewcommand\arraystretch{1.4}
\begin{array}{cc}
\left(0,0,e^{12},e^{13},e^{14},e^{34}-e^{25}\right),		&	\left(0,0,e^{12},e^{13},e^{14},e^{15}\right),				\\
\left(0,0,e^{12},e^{13},e^{14}+e^{23},e^{15}+e^{24}\right), 	&	\left(0,0,e^{12},e^{13},e^{14},e^{15}+e^{23}\right),		\\	
\left(0,0,e^{12},e^{13},e^{23},e^{14}\right),				&	\left(0,0,0,e^{12},e^{14}-e^{23},e^{15}+e^{34}\right), 		\\ 
\left(0,0,0,e^{12},e^{14},e^{15}+e^{23}\right), 	 		&	\left(0,0,0,e^{12},e^{14},e^{15}+e^{23}+e^{24}\right),		\\
\left(0,0,0,e^{12},e^{14},e^{15}+e^{24}\right), 			&	\left(0,0,0,e^{12},e^{14},e^{15}\right), 				\\
\left(0,0,0,e^{12},e^{13},e^{14}+e^{35}\right), 			&	\left(0,0,0,e^{12},e^{23},e^{14}+e^{35}\right), 			\\
\left(0,0,0,e^{12},e^{23},e^{14}-e^{35}\right), 			&	\left(0,0,0,e^{12},e^{14},e^{24}\right), 				\\	
\left(0,0,0,e^{12},e^{13}-e^{24},e^{14}+e^{23}\right), 		&	\left(0,0,0,e^{12},e^{14},e^{13}-e^{24}\right), 			\\
\left(0,0,0,e^{12},e^{13}+e^{14},e^{24}\right), 			&	\left(0,0,0,e^{12},e^{13},e^{14}+e^{23}\right), 			\\
\left(0,0,0,e^{12},e^{13},e^{24}\right), 				&	\left(0,0,0,e^{12},e^{13},e^{14}\right), 				\\
\left(0,0,0,0,e^{12},e^{15}+e^{34}\right),				&	\left(0,0,0,0,e^{12},e^{15}\right),						\\
\left(0,0,0,0,e^{12},e^{14}+e^{25}\right),				&	\left(0,0,0,0,0,e^{12}+e^{34}\right).	
\end{array}
\]
Any solution of $\mathcal{S}$ implies $\det(A_2)=0$ in the following cases
\[
\begin{array}{ll}
\left(0,0,0,e^{12},e^{13},e^{23}\right),	&	\left(0,0,0,0,e^{12},e^{14}+e^{23}\right).
\end{array}
\]
Finally, any solution of $\mathcal{S}$ implies $\det(A_3)=0$ when $\frn$ is one of the following
\[
\renewcommand\arraystretch{1.4}
\begin{array}{cc}
\left(0,0,e^{12},e^{13},e^{23},e^{14}-e^{25}\right),	&	\left(0,0,e^{12},e^{13},e^{23},e^{14}+e^{25}\right),\\	
\left(0,0,0,0,e^{12},e^{13}\right),					&	\left(0,0,0,0,0,e^{12}\right),	
\end{array}
\]

We now claim that there exist (2,3)-trivial and strongly unimodular Lie algebras $\frg=\frn\rtimes_D\R$ when $\frn$ is one of the Lie algebras of \cite[Table A.1]{Sal} not appearing above, namely 
\[
\fra = \left(0,0,0,0,0,0\right),\quad \frn_{\sst1}=\left(0,0,0,0,e^{12},e^{34}\right),\quad \frn_{\sst2}=\left(0,0,0,0,e^{13}-e^{24},e^{14}+e^{23}\right). 
\]
Let us examine each case separately. 

For the abelian Lie algebra $\fra$, we have $\Der(\fra)\cong \End(\fra)$ and $H^k(\fra)\cong\Lambda^k(\fra^*)$.  
To obtain a (2,3)-trivial Lie algebra $\frg = \fra \rtimes_D\R$, it is sufficient to consider a diagonal derivation 
\[
D = \diag\left(a_1,a_2,a_3,a_4,a_5,a_6\right),
\]
with $a_i\neq0$, and $a_i\neq -a_j, -a_j-a_k$, whenever $i, j, k$ are distinct (see also \cite[Ex.~5.1]{MaSw2} for a similar case). 
Moreover, the Lie algebra $\frg$ is strongly unimodular if and only if $\tr(D) = \sum_{i=1}^6 a_i = 0$. This last equation clearly admits solutions under the above contraints. 

The Lie algebra $\frn_{\sst1}$ has the following derived series 
\[
(\frn_{\sst1})^{\sst0}=\frn_{\sst1},\quad (\frn_{\sst1})^{\sst1} = \langle e_5, e_6\rangle, \quad (\frn_{\sst1})^{\sst i} = \{0\},~i\geq2, 
\]
and the generic derivation $D_1\in\Der(\frn_{\sst1})$ has the following expression
\begin{equation}\label{Dern1}
D_1 = 
\begin{pmatrix}
a_1		&	a_2	&	0	&	0	&	0	&	0	\\
a_3		&	a_4	&	0	&	0	&	0	&	0	\\
0		&	0	&	a_5	&	a_6	&	0	&	0	\\
0		&	0	&	a_7	&	a_8	&	0	&	0	\\
a_9		&a_{10}	&a_{11}	&a_{12}	&a_1+a_4	&	0	\\
a_{13}	&a_{14}	&a_{15}	&a_{16}	&	0	&a_5+a_8
\end{pmatrix}. 
\end{equation}
Thus, the seven-dimensional Lie algebra $\frg = \frn_{\sst1} \rtimes_{D_1} \R$ is strongly unimodular if and only if 
\[
a_1+a_4+a_5+a_8=0. 
\] 
We can now compute the determinant of the matrices $A_1, A_2, A_3$  (see the Appendix for their expression), 
and notice that there exist solutions of the equation above for which $\det(A_k)\neq0,$ for $k=1,2,3$. 
An example is given by  
\[
a_1 = 1,\quad a_4 = 3,\quad a_5 = 2,\quad a_8 = -6,\quad a_i = 0~\mbox{otherwise}. 
\]
  
Finally, let us consider the Lie algebra $\frn_{\sst2}$. Its derived series is  
\[
(\frn_{\sst2})^{\sst0}=\frn_{\sst2},\quad (\frn_{\sst2})^{\sst1} = \langle e_5, e_6\rangle, \quad (\frn_{\sst2})^{\sst i} = \{0\},~i\geq2, 
\]
and the generic derivation $D_2\in\Der(\frn_{\sst2})$ is
\begin{equation}\label{Dern2}
D_2 = 
\begin{pmatrix}
a_1		&	a_2	&	a_3	&	a_4	&	0	&	0	\\
-a_2		&	a_1	&	-a_4	&	a_3	&	0	&	0	\\
a_5		&	a_6	&	a_7	&	a_8	&	0	&	0	\\
-a_6		&	a_5	&	-a_8	&	a_7	&	0	&	0	\\
a_9		&a_{10}	&a_{11}	&a_{12}	&a_1+a_7	&a_2+a_8	\\
a_{13}	&a_{14}	&a_{15}	&a_{16}	&-a_2-a_8	&a_1+a_7
\end{pmatrix}. 
\end{equation}
From this, we see that $\frg = \frn_{\sst2}\rtimes_{D_2}\R$ is strongly unimodular if and only if 
\[
a_1+a_7=0. 
\]
There exist solutions of this equation such that $\det(A_k)\neq0,$ for $k=1,2,3$. For instance 
\[
a_1=1,\quad a_4 = 2,\quad a_5 = 2,\quad a_7=-1, \quad a_8=1,\quad a_i = 0~\mbox{otherwise}. 
\]
Also in this case, the explicit expression of the matrices $A_1,A_2,A_3$ can be found in the Appendix. 
\end{proof}

We now show that there are no exact $\G_2$-structures on any seven-dimensional solvable Lie algebra of the form $\frg = \frn \rtimes_D \R$, when $\frn = \fra, \frn_{\sst1}$, and when $\frn=\frn_{\sst2}$ and 
$D\in\Der(\frn_{\sst2})$ is such that $\frg$ is strongly unimodular.     
Combining this result with Theorem \ref{Thm23trivial}, we get that there are no (2,3)-trivial, strongly unimodular seven-dimensional Lie algebras admitting exact $\G_2$-structures. 

\begin{theorem}\label{Thm23trivialNoexact}
Any seven-dimensional solvable Lie algebra $\frg=\frn\rtimes_D\R$, with $\frn=\fra, \frn_{\sst1}$, does not admit exact $\G_2$-structures. 
Moreover, the same conclusion holds true if $\frn=\frn_{\sst2}$ and the derivation $D\in\Der(\frn_{\sst2})$ is such that $\frg$ is strongly unimodular. 
\end{theorem}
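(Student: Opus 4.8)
The plan is to work with the vector-space splitting $\frg=\frn\oplus\R e_7$, under which every form decomposes as $\alpha=\alpha_0+e^7\W\alpha_1$ with $\alpha_0,\alpha_1\in\Lambda^\bullet(\frn^*)$, and the Chevalley--Eilenberg differential reads $d\alpha=\hat d\alpha_0+e^7\W(\cL_{e_7}\alpha_0-\hat d\alpha_1)$, where $\hat d$ is the differential of $\frn$ and $\cL_{e_7}=D^*$ is the induced derivation of $\Lambda^\bullet(\frn^*)$. Two structural facts drive the argument. First, if $\f=\f_0+e^7\W\omega$ is a $\G_2$-form, then $\f_0=\f|_\frn$ is a \emph{stable} $3$-form of complex type on the six-dimensional space $\frn$: taking the $g_\f$-orthogonal unit normal to the hyperplane $\frn$ reduces $\f$ to the standard $\SU(3)\subset\G_2$ splitting, for which $\f|_\frn=\re\Omega$. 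Second, $\omega=\iota_{\sst e_7}\f|_\frn$ is nondegenerate, since $\iota_{\sst e_7}\f$ has rank six with kernel $\langle e_7\rangle$. Writing $d\f=0$ in the splitting gives $\hat d\f_0=0$ and $\cL_{e_7}\f_0=\hat d\omega$, while $\f=d\gamma$ forces in particular $\f_0=\hat d\gamma_0\in B^3(\frn)$.

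First I would dispose of $\frn=\fra$ and $\frn=\frn_{\sst1}$, where exactness already fails at the level of $\f_0$. For the abelian algebra $B^3(\fra)=0$, so $\f_0=0$ and $\f=e^7\W\omega$ is not stable. For $\frn_{\sst1}$ one computes $B^3(\frn_{\sst1})=\langle e^{123},e^{124},e^{134},e^{234},\,e^{126}-e^{345}\rangle$, and evaluating Hitchin's quartic invariant on a general element $\f_0$ shows that the associated endomorphism $K_{\f_0}$ satisfies $K_{\f_0}^2=c^4\,\Id$, where $c$ is the coefficient of $e^{126}-e^{345}$; hence $\lambda(\f_0)=c^4\geq0$ and no element of $B^3(\frn_{\sst1})$ is of complex type. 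By the first fact this excludes exact $\G_2$-forms for \emph{every} derivation $D$, matching the stated generality.

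The case $\frn=\frn_{\sst2}$ is the real obstacle, and it is where strong unimodularity must be used. Here $B^3(\frn_{\sst2})=\Lambda^3\langle e^1,\dots,e^4\rangle\oplus\R\,\Xi$ with $\Xi=\hat d\,e^{56}=e^{136}-e^{246}-e^{145}-e^{235}$, and this space does contain complex-type stable forms: for $\f_0=\Xi$ one finds $K_{\f_0}^2=-4\,\Id$, so $\lambda(\Xi)<0$. Thus the first fact alone is insufficient. My plan is to normalise $\f_0$ by an automorphism of $\frn_{\sst2}$ (this conjugates $D$ within $\Der(\frn_{\sst2})$ and preserves the strong unimodularity conditions, which are trace conditions on the $\ad$-invariant filtration $\{\frn^{\sst i}\}$), absorbing the $\Lambda^3\langle e^1,\dots,e^4\rangle$-part so that $\f_0=\Xi$, and then to exploit the surviving closedness equation $\cL_{e_7}\f_0=\hat d\omega$. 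Since $\Xi=\hat d\,e^{56}$ and $\cL_{e_7}$ commutes with $\hat d$, this gives $\hat d\omega=\hat d(D^*e^{56})$, whence $\omega=D^*e^{56}+z$ for some closed $z\in Z^2(\frn_{\sst2})$; imposing strong unimodularity, i.e.\ $a_1+a_7=0$ in the notation of Theorem~\ref{Thm23trivial}, kills the $e^{56}$-component of $D^*e^{56}$, leaving $\omega$ built from $\langle e^1,\dots,e^4\rangle\W\langle e^5,e^6\rangle$-terms and a closed remainder $z$.

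The final step is to insert $\f=\Xi+e^7\W\omega$ into the metric identity \eqref{G2metricvol}. A direct expansion gives, for $X,Y\in\frn$,
\[
6\,g_\f(X,Y)\,dV_\f=e^7\W\left(\iota_{\sst X}\Xi\W\iota_{\sst Y}\Xi\W\omega-\iota_{\sst X}\Xi\W\iota_{\sst Y}\omega\W\Xi-\iota_{\sst X}\omega\W\iota_{\sst Y}\Xi\W\Xi\right),
\]
and the aim is to show that this symmetric bilinear form on $\frn$ cannot be definite for any admissible $\omega$. Conceptually the obstruction is a clash of complex structures: the stable form $\Xi$ induces on $\frn$ the almost complex structure $J_\Xi$ with $e_1\mapsto e_2,\ e_3\mapsto e_4,\ e_5\mapsto-e_6$, whereas every $D\in\Der(\frn_{\sst2})$ is complex-linear for the \emph{opposite} structure on the centre $\langle e_5,e_6\rangle$; under strong unimodularity the leading term $\iota_{\sst X}\Xi\W\iota_{\sst Y}\Xi\W\omega$ then forces an indefinite signature. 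I expect the hardest part to be carrying out this signature computation uniformly over all closed $z\in Z^2(\frn_{\sst2})$ solving the closedness equation; it is, however, a finite linear-algebra check in the explicit basis. Combined with Theorem~\ref{Thm23trivial}, it yields the absence of exact $\G_2$-structures on every $(2,3)$-trivial strongly unimodular seven-dimensional Lie algebra.
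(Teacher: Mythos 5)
Your strategy is genuinely different from the paper's, which simply writes out the generic exact $3$-form $d\alpha$ on each $\frg=\frn\rtimes_D\R$ and checks that $\iota_{\sst e_6}d\alpha\W\iota_{\sst e_6}d\alpha\W d\alpha$ vanishes identically for $\frn=\fra,\frn_{\sst1}$ and equals $-12\,(c_{56})^3(a_1+a_7)\,e^{1234567}$ for $\frn=\frn_{\sst2}$, so that strong unimodularity kills it. Your treatment of $\fra$ and $\frn_{\sst1}$ is correct and more conceptual: exactness forces $\f_0=\f|_{\frn}\in\hat{d}\Lambda^2(\frn^*)$, while the $\G_2$-condition forces $\f_0$ to be stable of complex type, and for these two algebras $\hat{d}\Lambda^2(\frn^*)$ contains no such form. (For $\frn_{\sst1}$ you do not even need $K_{\f_0}^2=c^4\Id$ in full: since $\iota_{\sst e_5}\f_0\W\f_0$ is always proportional to the $5$-form dual to $e_5$, the vector $e_5$ is a real eigenvector of $K_{\f_0}$, which is impossible for a complex-type form.) This nicely explains why no hypothesis on $D$ is needed in those cases.

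The gap is in the $\frn_{\sst2}$ case, which is the only place strong unimodularity enters and hence the substance of the theorem: you stop at ``the aim is to show that this symmetric bilinear form cannot be definite'' and defer the decisive computation, offering only a heuristic about clashing complex structures. That step is not optional, but it is also much easier than the ``signature computation uniformly over all closed $z$'' you anticipate: a single evaluation at $e_6$ suffices, and it does not depend on $z$ at all. Indeed, with $\f_0=c\,\Xi$ one has $\iota_{\sst e_6}\Xi=e^{13}-e^{24}$, hence $\iota_{\sst e_6}\Xi\W\iota_{\sst e_6}\Xi=2e^{1234}$ and $\iota_{\sst e_6}\Xi\W\Xi=2e^{12346}$; substituting into your displayed formula shows that $6\,g_\f(e_6,e_6)\,dV_\f$ is a nonzero multiple of $c^2\omega_{56}\,e^{1234567}$, where $\omega_{56}$ is the $e^{56}$-coefficient of $\omega$. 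Your own analysis of the closedness equation gives $\omega=-c\,D^*e^{56}+\beta$ with $\beta\in D^*\bigl(Z^2(\frn_{\sst2})\bigr)+\hat{d}(\frn_{\sst2}^*)$; since $D^*e^i\in\langle e^1,\dots,e^4\rangle$ for $i\leq4$, the space of $\beta$'s has no $e^{56}$-component, while $D^*e^{56}=2(a_1+a_7)e^{56}+\dots$, so strong unimodularity forces $\omega_{56}=0$ and $g_\f(e_6,e_6)=0$, contradicting definiteness. This is exactly the paper's identity in your normalised coordinates. Until this is written out (together with a justification of the normalisation $\f_0\rightsquigarrow\Xi$ --- the unipotent automorphisms $e^5\mapsto e^5+w$, $e^6\mapsto e^6+w'$ with $w,w'\in\langle e^1,\dots,e^4\rangle$ already sweep out $\Xi+\Lambda^3\langle e^1,\dots,e^4\rangle$ and preserve the descending central series, hence strong unimodularity), the main case of the theorem remains unproved.
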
 

\begin{proof}
Recall that a 3-form $\phi$ on a seven-dimensional Lie algebra $\frg$ defines a $\G_2$-structure if and only if 
\begin{equation}\label{nondegenerate}
\iota_{\sst v}\phi \W \iota_{\sst v}\phi \W \phi \neq 0 \in \Lambda^7(\frg^*),\quad \forall~v\in\frg\smallsetminus\{0\}.
\end{equation}

We have to consider the seven-dimensional solvable Lie algebra $\frg=\frn\rtimes_D\R$ in the case when the nilradical $\frn=\langle e_1,\ldots,e_6\rangle$ is one of $\fra, \frn_{\sst1}, \frn_{\sst2},$ 
with the structure equations given in the proof of Theorem \ref{Thm23trivial}, and $\R$ is generated by a vector $e_7$ such that $\ad_{e_7}$ is a generic derivation $D\in\Der(\frn)$. 

Under these assumptions, we need to show that there are no exact 3-forms satisfying the condition \eqref{nondegenerate} when $\frn=\fra, \frn_{\sst1},$ 
and when $\frn=\frn_{\sst2}$ with $D\in\Der(\frn_{\sst2})$ such that $\frg$ is strongly unimodular. 
In what follows, we denote the Chevalley-Eilenberg differential on $\frg$ and on $\frn$ by $d$ and $\hat{d}$, respectively. 
Recall that for any 2-form $\gamma\in\Lambda^k(\frn^*)$ the following identity holds
\[
d\gamma = \hat{d}\gamma +(-1)^{k+1}\,D^*\gamma \W e^7,
\]
where $D^*\gamma(v_1,\ldots,v_k) = \gamma(Dv_1,\ldots,v_k)+\cdots+\gamma(v_1,\ldots,Dv_k)$, for all $v_1,\ldots,v_k\in\frn$. 

Let $\frn=\fra$ be the abelian Lie algebra, and consider the generic 2-form $\alpha = \beta + \eta\W e^7$ on $\frg$, where $\beta\in\Lambda^2(\fra^*)$ and $\eta \in \Lambda^1(\fra^*)$. 
Then, the generic exact 3-form on $\frg$ is given by 
\[
\phi = d\alpha = d\beta + d\eta\W e^7 = \hat{d}\beta - D^*\beta\W e^7 +\hat{d}\eta\W e^7 = - D^*\beta\W e^7, 
\]
as $\hat{d}\gamma=0$ for any $\gamma\in\Lambda^k(\fra^*)$. 
It is clear that this 3-form cannot define a $\G_2$-structure, since there exists some nonzero $v\in\fra$ such that $\iota_{\sst v}\f \W \iota_{\sst v}\f \W \f =0$. 

Let us now consider the Lie algebra $\frg = \frn_{\sst1}\rtimes_{D_1}\R$, where $D_1\in\Der(\frn_{\sst1})$ is given by \eqref{Dern1}. 
Let $\alpha\in\Lambda^2(\frg^*)$, we write $\alpha = \sum_{1\leq i < j \leq 7} c_{ij} e^i \W e^j$, with $c_{ij}\in\R$, and see that a generic exact 3-form on $\frg$ has the following expression 
\[
\begin{split}
d\alpha 	&= 	(-a_1c_{12}-a_{4}c_{12}+a_{9}c_{25}-a_{10}c_{15}+a_{13}c_{26}-a_{14}c_{16}+c_{57}) e^{127} - c_{16}e^{134}\\
		&	-(a_{1}c_{16}+a_{3}c_{26}+a_{5}c_{16}+a_{8}c_{16}+a_{9}c_{56})e^{167} -c_{26} e^{234} -c_{35} e^{123} +c_{56}e^{126}\\ 
		&	-(a_{2}c_{16}+a_{4}c_{26}+a_{5}c_{26}+a_{8}c_{26}+a_{10}c_{56}) e^{267} -c_{45} e^{124} -c_{56} e^{345}\\
		&	-(a_{1}c_{13}+a_{3}c_{23}+a_{5}c_{13}+a_{7}c_{14}-a_{9}c_{35}+a_{11}c_{15}-a_{13}c_{36}+a_{15}c_{16}) e^{137}\\
		&	-(a_{2}c_{13}+a_{4}c_{23}+a_{5}c_{23}+a_{7}c_{24}-a_{10}c_{35}+a_{11}c_{25}-a_{14}c_{36}+a_{15}c_{26}) e^{237}\\
		&	-(2a_{5}c_{36}+a_{7}c_{46}+a_{8}c_{36}+a_{11}c_{56}) e^{367} -(a_{1}c_{25}+a_{2}c_{15}+2a_{4}c_{25}-a_{14}c_{56}) e^{257} \\
		&	-(a_{1}c_{14}+a_{3}c_{24}+a_{6}c_{13}+a_{8}c_{14}-a_{9}c_{45}+a_{12}c_{15}-a_{13}c_{46}+a_{16}c_{16}) e^{147}\\
		&	-(a_{2}c_{14}+a_{4}c_{24}+a_{6}c_{23}+a_{8}c_{24}-a_{10}c_{45}+a_{12}c_{25}-a_{14}c_{46}+a_{16}c_{26}) e^{247}\\
		&	-(a_{5}c_{34}+a_{8}c_{34}-a_{11}c_{45}+a_{12}c_{35}-a_{15}c_{46}+a_{16}c_{36}-c_{67}) e^{347}\\
		&	-(a_{5}c_{46}+a_{6}c_{36}+2a_{8}c_{46}+a_{12}c_{56}) e^{467} -(2a_{1}c_{15}+a_{3}c_{25}+a_{4}c_{15}-a_{13}c_{56}) e^{157}\\
		&	-(a_{1}c_{35}+a_{4}c_{35}+a_{5}c_{35}+a_{7}c_{45}-a_{15}c_{56}) e^{357}  -c_{56}(a_{4}+a_{1}+a_{8}+a_{5}) e^{567}\\
		&	-(a_{1}c_{45}+a_{4}c_{45}+a_{6}c_{35}+a_{8}c_{45}-a_{16}c_{56}) e^{457}. 
\end{split}
\]
This exact 3-form does not define a $\G_2$-structure, as we always have $\iota_{{e_6}}d\alpha \W \iota_{{e_6}}d\alpha \W d\alpha=0$, for all $a_i$ and $c_{ij}$. 

We are left with the case $\frg = \frn_{\sst2}\rtimes_{D_2}\R$. 
Recall from the proof of Theorem \ref{Thm23trivial} that $\frg$ is strongly unimodular if and only if the entries of the derivation $D_2\in\Der(\frn_{\sst2})$ given in \eqref{Dern2} satisfy the condition $a_1+a_7=0.$ 
Now, similarly as in the previous case, we can compute the expression of the generic exact 3-form $d\alpha\in\Lambda^3(\frg^*)$ and see that 
\[
\iota_{{e_6}}d\alpha \W \iota_{{e_6}}d\alpha \W d\alpha = -12\,(c_{56})^3 \left(a_1+a_7\right) e^{1234567}. 
\]
Thus, if $\frg$ is strongly unimodular, then $d\alpha$ cannot define a $\G_2$-structure. 
\end{proof}

%%%%%%%%%%%%%%%%%%%%%%%%%%%%%%%%%%%%%%%%%%%%%%%%%%%%%%%%%%%%%%%%%%%%%%%%%%%%%%%%%%%%%%%%%
%%%%%%%%%%%%%%%%%%%%%%%%%%%%%%%%%%%%%%%%%%%%%%%%%%%%%%%%%%%%%%%%%%%%%%%%%%%%%%%%%%%%%%%%%
%														LIE ALGEBRA AND GROUP											
%%%%%%%%%%%%%%%%%%%%%%%%%%%%%%%%%%%%%%%%%%%%%%%%%%%%%%%%%%%%%%%%%%%%%%%%%%%%%%%%%%%%%%%%%
%%%%%%%%%%%%%%%%%%%%%%%%%%%%%%%%%%%%%%%%%%%%%%%%%%%%%%%%%%%%%%%%%%%%%%%%%%%%%%%%%%%%%%%%%
\section{A unimodular example with $b_2\neq0$ and $b_3=0$}\label{sect:lie-group}
Let $\frh=\langle e_1,e_2,e_3,e_4,e_5,e_6,e_7\rangle$ be the seven-dimensional solvable Lie algebra with the following nonzero Lie brackets
\begin{equation}\label{StrEqns}
\renewcommand\arraystretch{1.2}
\begin{array}{lllll}
[e_1, e_3] = 4\, e_3,	&	[e_1, e_4] = - e_4,		&	[e_1, e_5] = - 5\, e_5,	&	[e_1, e_6] =  - e_6,	&	[e_1, e_7] = 3\, e_7,	\\ [2pt]
[e_2, e_3] = 3\, e_3,	&	[e_2, e_5] =  - 4\, e_5,	&	[e_2, e_6] = - e_6,		&	[e_2, e_7] =  2\, e_7,	&					\\ [2pt]	
[e_3, e_5] = -e_6,	&	[e_3, e_6] = - e_7.		&						&					&
\end{array}
\end{equation}

This Lie algebra is the semidirect product $\frh = \R^2\ltimes \frm$, where the abelian Lie algebra $\R^2=\langle e_1,e_2\rangle$ acts on the nilradical $\frm=\langle e_3,e_4,e_5,e_6,e_7\rangle$ 
of $\frh$ via the representation 
\[
\rho: \R^2 \rightarrow \Der(\frm),\quad \rho(e_1)=\diag(4,-1,-5,-1,3),\quad  \rho(e_2)=\diag(3,0,-4,-1,2).
\]
Moreover, $\frh$ is easily seen to be unimodular and completely solvable. 

We now show that the simply connected solvable Lie group $\H$ corresponding to $\frh$ does not contain any lattice. 
Indeed, the only nonzero terms in the descending central series of the nilradical $\frm$ of $\frh$ are 
\[
\frm^{\sst0}=\frm, \quad \frm^{\sst1} = \langle e_6,e_7\rangle,\quad \frm^{\sst2} =\langle e_7\rangle, 
\]  
and from \eqref{StrEqns} it follows that $\left. \ad_{e_1} \right |_{\frm^{\sst2}} = 3 \,  {\mathrm {Id}}_{\frm^{\sst2}}$. 
Hence, $\frh$ is not strongly unimodular.

Let $(e^1,\ldots,e^7)$ be the dual basis of $(e_1,\ldots, e_7)$. 
Then, the Chevalley-Eilenberg differentials of the covectors $e^i$ are the following:
\[
\renewcommand\arraystretch{1.4}
\begin{array}{lll}
de^1 = 0 = de^2, 			& de^3 = -4\,e^{13}-3\,e^{23},		& de^4 = e^{14}, \\
de^5 = 5\,e^{15}+4\,e^{25},	& de^6 = e^{16}+e^{26}+e^{35},	& de^7 = 	-3\,e^{17}-2\,e^{27}+e^{36}.
\end{array}
\]
Using these equations, we compute the cohomology groups of the Chevalley-Eilenberg complex $(\Lambda^\bullet(\frh^*),d)$ of $\frh$, obtaining:
\[
\renewcommand\arraystretch{1.4}
\begin{array}{lll} 
H^1 (\frg^*) = \langle  [e^1], [e^2] \rangle,	&	H^2 (\frg^*) = \langle  [e^1 \wedge e^2] \rangle,			&	H^3 (\frg^*) = H^4 (\frg^*) = \{ 0 \},\\[4pt]
H^5 (\frg^*) = \langle [e^{34567}] \rangle,	&	H^6 (\frg^*) = \langle [e^{234567}],  [e^{134567}] \rangle,	&	H^7 (\frg^*) = \langle [e^{1234567}] \rangle.
\end{array}
\]

Therefore, the Betti numbers of the Lie algebra $\frh$ are the following
\[
b_1(\frh)= b_6(\frh)  = 2,\qquad b_2(\frh)= b_5 (\frh) =  b_7 (\frh) = 1,\qquad b_3(\frh)=  b_4(\frh)=0.
\]

We now describe an explicit example of a left-invariant exact $\G_2$-structure $\f$ on $\H$.

Let us consider the following basis of $\frh^*$:
\[
\begin{aligned}
&E^1 = e^3,\quad E^2 = \frac{1}{4\sqrt{3}}\,e^2,\quad E^3 = -\sqrt{3}\,e^4 + 2\sqrt{3}\,e^6,& \\
&E^4 = e^5,\quad E^5 = e^4 + 2\,e^6,\quad E^6 = 8\sqrt{3}\,e^7, \quad E^7=e^1+\frac34\,e^2.& 
\end{aligned}
\]
The structure equations of $\frh$ with respect to this new basis are:
\[
\begin{dcases}
d E^1 = 4\, E^{17},\\
d E^2 =0,\\
d E^3 = -E^{37} + 2 \sqrt{3}\, E^{14} - \sqrt{3}\, E^{23} + 6\, E^{25},\\
d E^4 =  -5\, E^{47} + \sqrt{3} E^{24},\\
d E^5 =  -E^{57} + 2\, E^{14} + 2\, E^{23} - \sqrt{3}\, E^{25},\\
d E^6 =   3\, E^{67} + 2\, E^{13} + 2 \sqrt{3}\, E^{15} + \sqrt{3}\, E^{26},\\
d E^7 =0.
\end{dcases}
\]

We consider the $\G_2$-structure $\f$ on $\frh$ with adapted $\G_2$-basis $(E^1,\ldots,E^7)$, that is,
\[
\f = E^{127} + E^{347} + E^{567} + E^{135} - E^{146} - E^{236} - E^{245}.
\]
This is easily seen to be a closed, hence exact, $\G_2$-structure. In particular, we have
\[
\begin{aligned}
\f &= d \left ( - \frac 14\, E^{12} + \frac 16\, E^{34} - \frac 12\, E^{56} \right ). 
\end{aligned}
\]

\section*{Acknowledgements}
The first author was partially supported by MINECO-FEDER Grant PGC2018-098409-B-100,
and Gobierno Vasco Grant IT$1094-16$, Spain. 
The second and third author were supported by GNSAGA of INdAM and by  the project PRIN 2017  \lq \lq Real and Complex Manifolds: Topology, Geometry and Holomorphic Dynamics''.  
The authors are grateful to Thomas Bruun Madsen and Yves Cornulier for useful suggestions and conversations. 
The second and the third author would like to thank the organizers of the workshop  ``G$_2$ manifolds and related topics'' (Casa Matem\'atica Oaxaca, May 5-10, 2019), 
as well as the BIRS and CMO research center.

\appendix

\section{}\label{appendix}
In this appendix, we explicitly write the matrices associated to the endomorphisms $A_1, A_2, A_3,$ introduced in the proof of Theorem \ref{Thm23trivial}, in the case when $\frn=\frn_{\sst1}$ and $\frn=\frn_{\sst2}$. 
For each of these Lie algebras, we write an ordered basis of the cohomology group $H^k(\frn^*)$, $k=1,2,3,$ and the matrix associated to $A_k$ with respect to that basis. 
For the sake of convenience, we first impose the conditions on $D\in \Der(\frn)$ for which $\frg = \frn \rtimes_D\R$ is strongly unimodular. 

\subsection*{Case $\frn=\frn_{\sst1}$} 
For the Lie algebra $\frn_{\sst1}= \left(0,0,0,0,e^{12},e^{34}\right)$ with generic derivation $D_1$ given by \eqref{Dern1}, 
we have that $\frg=\frn\rtimes_{D_1}\R$ is strongly unimodular if and only if $a_8=-a_1-a_4-a_5$. In this case, we get
\begin{enumerate}[$\bullet$]
\item  $H^1(\frn_{\sst1}) = \langle[e^1],~[e^2],~[e^3],~[e^4]\rangle$, 
\[
A_1 = 
-\begin{pmatrix}
a_1	&	a_3	&	0	&	0		\\
a_2	&	a_4	&	0	&	0		\\
0	&	0	&	a_5	&	a_7		\\
0	&	0	&	a_6	&	-a_1-a_4-a_5	
\end{pmatrix};
\]
\item $H^2(\frn_{\sst1}) = \langle[e^{13}],[e^{14}],[e^{15}], [e^{23}],[e^{24}],[e^{25}],[e^{36}],[e^{46}] \rangle$,
\[A_2 = 
\begin{psmallmatrix}
-a_1 - a_5 &  -a_7 &  -a_{11} &  -a_3 & 0 & 0 &  a_{13} &  0  \\
 -a_6 &  a_4 +a_5. &  -a_{12} & 0 & -a_3 & 0 &  0 & a_{13} \\
 0 &  0 &  -2 a_1 -a_4  &  0 &  0 &  -a_3 & 0 &  0\\
  -a_2 &  0 & 0 &  -a_4 -a_5 &  -a_7 &  -a_{11} &  a_{14} &  0\\
0 & -a_2 &  0 &  -a_6  &a _1 +a_5 &  -a_{12} &  0 &  a_{14} \\
0 &  0 &  -a_2 &  0 &  0 &  -2 a_4 -a_1 &  0 &  0\\
0 &  0 & 0&  0 &  0&  0 & a_1+a_4 -a_5 &  -a_7 \\
 0 & 0 &  0 &  0& 0 &  0 &  -a_6 &  2 a_4 +2 a_1 +a_5
\end{psmallmatrix};
\]
\item $H^3(\frn_{\sst1}) = \langle [e^{125}], [e^{135}], [e^{136}], [e^{145}], [e^{146}], [e^{235}], [e^{236}], [e^{245}], [e^{246}], [e^{346}]   \rangle$, 
\[
A_3 =  
\begin{psmallmatrix}
2 a_4 +2 a_1 & 0& 0 & 0 & 0 & 0 &  0 &  0 & 0 & 0\\
0 &  2 a_1 +a_5 +a_4 &  0 &  a_6 &  0 & a_2 &  0 &  0 &  0 &  0\\
0 &  0 &  a_5 -a_4 &  0 & a_6 &  0 &  a_2  & 0 &  0 &  0\\
0 &  a_7 & 0 & a_1 -a_5 &  0 &  0 &  0 &  a_2 & 0 &  0\\
0 &  0 &  a_7 & 0 &  -2 a_4 -a_5 -a_1 &  0 &  0 & 0 &  a_2 &  0\\
0 & a_3 &  0 &  0 &  0 &  2 a_4 +a_5 +a_1 &  0 &  a_6 &  0 &  0\\
0 & 0 & a_3 &  0 & 0 & 0 & a_5 -a_1 &  0 & a_6 &  0\\
0 &  0&  0 &  a_3 &  0 &  a_7 & 0 &  a_4 -a_5 &  0& 0\\
0 &  0 & 0 &  0 &  a_3 &  0 & a_7 &  0 & -2 a_1 -a_5 -a_4 & 0\\
0 & 0 & 0 & 0 &  0 &  0 &  0 &  0 &  0 &  -2 a_4 -2 a_1 
\end{psmallmatrix}.
\]
\end{enumerate}

\subsection*{Case $\frn=\frn_{\sst2}$}  
Consider the Lie algebra $\frn_{\sst2}=\left(0,0,0,0,e^{13}-e^{24},e^{14}+e^{23}\right)$ with generic derivation $D_2$ given by \eqref{Dern2}. 
Then, $\frg=\frn_{\sst2}\rtimes_{D_2}\R$ is strongly unimodular if and only if $a_7=-a_1$. In this case, we have
\begin{enumerate}[$\bullet$]
\item $H^1(\frn_{\sst2}) = \langle  [e^1],~[e^2],~[e^3],~[e^4] \rangle $,
\[
A_1 = 
\begin{pmatrix}  
-a_1 &  a_2 &  -a_5 & a_6\\
-a_2 &  -a_1 & -a_6 &  -a_5\\
-a_3 &  a_4 &  a_1 &  a_8\\
-a_4 &  -a_3 & -a_8 &  a_1
\end{pmatrix};
\]
\item $H^2(\frn_{\sst2}) = \langle [e^{12}], [e^{34}], [e^{13}+e^{24}],  [e^{14}-e^{23}],  [e^{16}+e^{25}],  [-e^{15}+e^{26}],  [e^{36}+e^{45}],  [-e^{35}+e^{46}]  \rangle $,
\[
A_2 =\begin{psmallmatrix}
-2 a_1&  -2 a_6 &  -2 a_5 & a_9-a_{14} & a_{10}+a_{13} &  0 &  0 &  0\\
a_4 &  0 &  a_8-a_2 & - \frac 12  a_{15} & \frac 12 a_{11}- \frac 12 a_{16} & -a_6 &  \frac 12 a_{13}+ \frac 12 a_{10} &  - \frac 12 a_{9}+ \frac 12 a_{14}\\
 -a_3 & a_2-a_8 &  0 &  - \frac 12 a_{16} &   \frac 12 a_{12}+ \frac 12 a_{15} &  -a[
_5 &  - \frac 12 a_{14}+ \frac 12 a_9 &  \frac 12 a_{13}+ \frac 12 a_{10}\\
0 &   &  0 &  -a_1 &  a_8+2 a_2 &  0 &  -a_5 &  a_6\\
0 &  0 &  0 &  -a_8-2 a_2  &  -a_1 & 0 &  -a_6 &  -a_5\\
0 &  2 a_4  &  -2 a_3  &  0 &  0 &  2 a_1  & a_{11} -a_{16} & a_{12} +a_{15}\\
0 &  0 &  0 &  -a_3 &  a_4  & 0 &  a_1 &  a_2 +2 a_8\\
0 & 0 &  0 &  -a_4 &  -a_3 &  0 &  -a_2 -2 a_8 &  a_1
 \end{psmallmatrix}; 
\]
\item $H^3(\frn_{\sst2}) = \langle [e^{125}],  [e^{126}],  [e^{345}],  [e^{346}],  [-e^{135}+e^{146}], [-e^{135}+e^{236}], [e^{135}+e^{245}], [e^{136}+e^{246}], [e^{145}-e^{246}], [e^{235}-e^{246}]   \rangle $, 
\[
A_3 =  \begin{psmallmatrix}
-2 a_1 & 0& 0 &  -a_4 &  -a_4&  0 &  -a_3 & 0 &  -a_4 &  a_2+a_8\\
0 &  -2a_1 &  -a_2-a_8  &  a_5 & -a_6 &  -2 a_5 &  -a_6 &  2 a_6 &  -a_5 &  0\\
0 &  a_2+a_8 &  2 a_1& a_6 & a_5&  0 &  -a_5 &  0 &  -a_6 &  0\\
a_6 &  0 & a_4& 0 &  -a_2 -2 a_8 &  0 &  -a_8 -2 a_2 &  a_2+a_8& 0 &  0\\
a_5&  0 &  a_3 &  a_2+2a_8 &  0 &  -a_2-a_8&  0 &  0 &  -a_8-2 a_2 &  0\\
0 & -a_3 &  0 &  0 &  -a_2&  0 &  -a_8 -2 a_2 &  a_2-a_8 & 0 &  -a_5\\
-a_5 & 0 &  -a_3&  a_2 &  0 &  a_2+a_8 &  0 &  0 &  -a_8 &  0\\
0 & -a_4 &  0 &  a_2 &  0 &  -a_2+a_8 &  0 &  0 & -a_8 -2 a_2 & a_6\\
-a_6 & 0 & a_4 &  0 &  a_2 &  0 & a_8 &  a_2+a_8&  0 &  0\\
-a_2-a_8 &  0 &  0 & a_3 &  a_3 &  -2 a_3 &  a_4&  -2 a_4 &  -a_3 &  -2 a_1
\end{psmallmatrix}.
\]
\end{enumerate}

\end{document}